\newtheorem{Lemma}{Lemma}
\newtheorem{Conjecture}{Conjecture}
\newtheorem{Theorem}{Theorem}
\newtheorem{Definition}{Definition}
\newcommand{\conv}{\text{conv}}
\newcommand{\sep}{\text{sep}}
\newcommand{\Int}{\text{int}}
\begin{document}
\title{Regular Totally Separable Sphere Packings}
\author{Samuel Reid\thanks{University of Calgary, Centre for Computational and Discrete Geometry (Department of Mathematics \& Statistics), and Thangadurai Group (Department of Chemistry), Calgary, AB, Canada. $\mathsf{e-mail: smrei@ucalgary.ca}$.}}
\maketitle

\begin{abstract}
The topic of totally separable sphere packings is surveyed with a focus on regular constructions, uniform tilings, and contact number problems.
An enumeration of all regular totally separable sphere packings in $\mathbb{R}^2$, $\mathbb{R}^3$, and $\mathbb{R}^4$ which are based on
convex uniform tessellations, honeycombs, and tetracombs, respectively, is presented, as well as a construction
of a family of regular totally separable sphere packings in $\mathbb{R}^d$
that is not based on a convex uniform $d$-honeycomb for $d \geq 3$.
\end{abstract}

\textbf{Keywords:} sphere packings, hyperplane arrangements, contact numbers, separability. \\
\text{   \;\;       } \textbf{MSC 2010 Subject Classifications:} Primary 52B20, Secondary 14H52.

\section{Introduction}
In the 1940s, P. Erd\"{o}s introduced the notion of a separable set of domains in the plane, which gained
the attention of H. Hadwiger in \cite{Hadwiger}.
G.F. T\'{o}th and L.F. T\'{o}th extended this notion to totally separable domains and proved the densest totally
separable arrangement of congruent copies of a domain is given by a lattice packing of the domains generated by the
side-vectors of a parallelogram of least area containing a domain \cite{Toth}. Totally separable
domains are also mentioned by G. Kert\'{e}sz in \cite{Kertesz}, where it is proved that a cube of volume $V$ contains a totally separable set of $N$ balls of radius $r$
with $V \geq 8Nr^3$. Further results and references regarding separability can be found in a
manuscript of J. Pach and G. Tardos \cite{Pach}.

This manuscript continues the study of separability in the context of regular unit sphere packings, i.e., infinite
sets of unit spheres $$\mathcal{P} = \bigcup_{i=1}^{\infty} \left(x_{i} + \mathbb{S}^{d-1}\right)$$ 
in $\mathbb{R}^d$ with $\| x_{i} - x_{j} \| \geq 2$, whose contact graphs $G_{\mathcal{P}} = (V,E)$, where $V = \{x_{i} \; | \; i \in \mathbb{N}\}$ and
$$E = \left\{(i,j) \; | \; \left(x_{i} + \mathbb{S}^{d-1}\right)\cap \left(x_{j} + \mathbb{S}^{d-1}\right) \neq \emptyset\right\},$$
are regular (every vertex has equal degree); this means that every sphere in the packing touches the same number
of spheres.

Let $C(\mathcal{P}_{n})$ be the contact number of a unit sphere packing $\mathcal{P}_{n}$ with $n$ spheres, i.e.,
the cardinality of the edge set of the contact graph $G_{\mathcal{P}_{n}}$. Determining the maximum contact number of
a unit sphere packing with $n$ spheres is known as the contact number problem. The contact number problem for circle packings
in $\mathbb{R}^2$ was solved exactly in 1974 by H. Harborth in \cite{Harborth} to be $\lfloor 3n - \sqrt{12n -3} \rfloor.$
Upper and lower bounds on the contact number problem for finite packing of unit balls
in $\mathbb{R}^3$ were provided by K. Bezdek and the author in \cite{Reid} and studied in detail up to $n=18$ by M. Holmes-Cerfon in \cite{Holmes-Cerfon}
improving the lower bounds for some values. Consult \cite{Bezdek} and references therein for more information regarding 
contact numbers of unit sphere packings and arrangements of spheres in higher dimensions.

\begin{Definition}
A sphere packing $\mathcal{P}$ is totally separable if every tangent hyperplane to a pair of touching spheres has
an empty intersection with the interior of all spheres in $\mathcal{P}$.
\end{Definition}

The contact number problem for totally separable sphere packings is studied and all regular totally separable
 sphere packings in $\mathbb{R}^2$, $\mathbb{R}^3$, and $\mathbb{R}^4$ based on convex uniform tessellations (classified in an
 unpublished manuscript of G. Olshevsky \cite{Olshevsky}) are constructed. Now, let
$$c(n,d) = \max_{\sep(\mathcal{P}_{n})=1} C(\mathcal{P}_{n}),$$ where $\sep(\cdot)$ is a measure on sphere packings called the
\textit{separability} of the packing which is defined formally in the appendix; intuitively, the separability of a packing is
0 if the packing is inseparable and 1 if it is totally separable. The theory of minimal area polyominoes developed in \cite{Alonso}
is used with Euler's formula to provide a proof of the contact number problem for totally
separable circle packings:
$$c(n,2) = \Big \lfloor 2(n - \sqrt{n}) \Big \rfloor.$$
Furthermore, heuristics are provided for the upper bound on the contact number problem for totally separable sphere packings in $\mathbb{R}^d$
which is based on the number of edges of polyominoes over the cubic $d$-honeycomb and hence exact when $\sqrt[d]{n} \in \mathbb{N}$:
$$c(n,d) \leq \Big\lfloor d \left(n - n^{\frac{d-1}{d}}\right) \Big\rfloor.$$
As this manuscript was being prepared, K. Bezdek, B. Szalkai, and I. Szalkai proved the above upper bound on
$c(n,d)$ with an ingenious argument involving box-polytopes and the isoperimetric inequality \cite{BezdekSz}.
The paper ends with a construction of a family of regular totally separable sphere packings in $\mathbb{R}^d$
that is not based on a convex uniform tessellation for $d \geq 3$ and an outline of future research directions.

The most basic example of when the condition on a totally separable sphere packing is violated is explained
in the form of a lemma for future reference.

\begin{Lemma}\label{triangle}
If the contact graph $G_{\mathcal{P}}$ of a sphere packing $\mathcal{P}$ in $\mathbb{R}^d$ contains a $k$-simplex
for $2 \leq k \leq d$, then $\mathcal{P}$ is not totally separable.
\end{Lemma}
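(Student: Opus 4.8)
The plan is to reduce at once to the case $k = 2$ and then exhibit, for any mutually tangent triple of spheres, a tangent hyperplane that violates the defining condition. First I would observe that a $k$-simplex in $G_{\mathcal{P}}$ with $k \geq 2$ contains, in particular, a triangle: there are three centres $x_1, x_2, x_3$ that are pairwise adjacent. Adjacency means the corresponding unit spheres have nonempty intersection; combined with the packing inequality $\|x_i - x_j\| \geq 2$ and the elementary fact that two unit spheres meet only when their centres lie at distance at most $2$, this forces $\|x_i - x_j\| = 2$ for each of the three pairs. Hence each pair is externally tangent, the contact point of $(x_i + \mathbb{S}^{d-1})$ and $(x_j + \mathbb{S}^{d-1})$ being the midpoint $\tfrac{1}{2}(x_i + x_j)$.

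Next I would write the relevant tangent hyperplane down explicitly. For the tangent pair $(x_1 + \mathbb{S}^{d-1})$, $(x_2 + \mathbb{S}^{d-1})$, the common tangent hyperplane at their contact point $\tfrac{1}{2}(x_1 + x_2)$ is
$$H = \left\{ y \in \R^d \; : \; \left\langle y - \tfrac{1}{2}(x_1 + x_2), \, x_2 - x_1 \right\rangle = 0 \right\},$$
which is exactly the perpendicular bisector hyperplane of the segment $[x_1, x_2]$; one checks directly that the distance from $x_1$ (resp. $x_2$) to $H$ equals $1$, so $H$ is indeed tangent to both spheres. Since $\|x_3 - x_1\| = \|x_3 - x_2\| = 2$, the centre $x_3$ is equidistant from $x_1$ and $x_2$ and therefore lies on $H$. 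But $x_3$ is an interior point of the sphere $x_3 + \mathbb{S}^{d-1}$, so $H \cap \Int(x_3 + \mathbb{S}^{d-1}) \neq \emptyset$. By the definition of total separability this already shows that $\mathcal{P}$ is not totally separable.

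I do not expect a genuine obstacle; the only points needing a little care are (i) confirming that $H$ qualifies as a tangent hyperplane to the pair $\{x_1 + \mathbb{S}^{d-1}, x_2 + \mathbb{S}^{d-1}\}$ — it is the hyperplane through the contact point orthogonal to the line of centres, and it is tangent to each sphere — and (ii) noting that the definition is violated as soon as \emph{one} tangent hyperplane to \emph{one} touching pair meets the interior of \emph{some} sphere, so a single bad triple suffices no matter how large $k$ is or how the remainder of the packing is configured. It is worth remarking in passing that the three centres automatically form an equilateral triangle of side $2$ and are in particular non-degenerate, although the argument above never uses this.
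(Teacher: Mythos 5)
Your proof is correct and takes essentially the same route as the paper's: reduce the $k$-simplex to a $2$-simplex (a mutually tangent triple), then observe that the tangent hyperplane associated with one edge meets the interior of the third sphere. The only difference is that you make explicit the computation the paper merely asserts, namely that this hyperplane is the perpendicular bisector of $[x_1,x_2]$ and hence contains the equidistant centre $x_3$.
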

\begin{proof}
First consider the case where $G_{\mathcal{P}}$ contains a $2$-simplex and observe that it violates
total separability. For, the tangent line generated by the touching circles associated with an edge $e$ of
the $2$-simplex intersects the interior of the circle associated with the vertex which is not an endpoint of $e$.
Proceed by induction, observing from the base case $d=2$ that any $k$-simplex with $3 \leq k \leq d$ 
in $G_{\mathcal{P}}$ violates total separability as that $k$-simplex contains
a $2$-simplex somewhere in its flag, thus proving the lemma.
\end{proof}

This lemma will be used extensively for classifying totally separable sphere packings based on convex
uniform tesselations of $\mathbb{R}^d$, also known as tilings or honeycombs.

\section{Regular Totally Separable Circle Packings in $\mathbb{R}^2$}
Regular totally separable circle packings in $\mathbb{R}^2$ which are based on convex uniform tilings are classified
by the following theorem.
\begin{Theorem}\label{tiling2}
There are exactly 4 convex uniform tilings in $\mathbb{R}^2$ which generate totally separable
circle packings:
\begin{enumerate}
 \item $\mathcal{P}$1 - Square tiling, $\{4,4,4\}$
 \item $\mathcal{P}$3 - Hexagonal tiling, $\{6,6,6\}$
 \item $\mathcal{K}$6 - Truncated square tiling, $\{4,8,8\}$
 \item $\mathcal{K}$9 - Omnitruncated trihexagonal tiling, $\{4,6,12\}$
\end{enumerate}
\end{Theorem}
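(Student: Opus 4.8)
The plan is a finite case check over all convex uniform tilings of the plane. By the classification of such tilings (cf.\ \cite{Olshevsky}) there are exactly eleven of them: the three regular tilings $\{4,4,4\}$, $\{6,6,6\}$, $\{3,3,3,3,3,3\}$, and the eight Archimedean tilings, with vertex symbols $3.4.6.4$, $3.6.3.6$, $3.12^2$, $4.8^2$, $4.6.12$, $3^3.4^2$, $3^2.4.3.4$, and $3^4.6$. To each uniform tiling $T$ I attach the packing $\mathcal{P}(T)$ obtained by rescaling $T$ so that every edge has length $2$ and centring a unit circle at each vertex of $T$; since in each of these tilings the second-nearest vertices are strictly farther than $2$ from a given vertex, the contact graph $G_{\mathcal{P}(T)}$ is exactly the $1$-skeleton of $T$, and since uniform tilings are vertex-transitive, $\mathcal{P}(T)$ is a regular packing. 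This $\mathcal{P}(T)$ is the packing ``generated by'' $T$, and the problem is to decide for which $T$ it is totally separable.

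The impossibility direction follows at once from Lemma~\ref{triangle}: if $T$ has a triangular face, then the three circles at its vertices are pairwise tangent, so $G_{\mathcal{P}(T)}$ contains a $2$-simplex and $\mathcal{P}(T)$ is not totally separable. This removes exactly the seven tilings whose vertex symbol contains a $3$, namely $\{3,3,3,3,3,3\}$, $3.4.6.4$, $3.6.3.6$, $3.12^2$, $3^3.4^2$, $3^2.4.3.4$, and $3^4.6$, and leaves the four tilings of the statement, whose faces are only squares, regular hexagons, regular octagons, and regular dodecagons.

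For the existence direction I must show that $\mathcal{P}(\{4,4,4\})$, $\mathcal{P}(\{6,6,6\})$, $\mathcal{P}(\{4,8,8\})$, and $\mathcal{P}(\{4,6,12\})$ really are totally separable; triangle-freeness of the contact graph is not by itself sufficient, since a priori a vertex far from an edge could still lie on that edge's tangent line. The tangent line of a touching pair $v_i,v_j$ is the perpendicular bisector $\ell$ of the segment $v_iv_j$, and --- because every face of these four tilings is a centrally symmetric regular polygon --- $\ell$ passes through the centres of the two faces incident to the edge $v_iv_j$; one then checks directly that in each of the four cases $\ell$ is parallel to a translation vector of $T$, so that $T$ has a translational symmetry carrying $\ell$ onto itself. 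Hence it suffices to verify, within a single period of $\ell$, that every vertex other than $v_i,v_j$ lies at distance $\ge 1$ from $\ell$; moreover $\|v_k-v_i\|\ge 2$ and $\|v_k-v_j\|\ge 2$ force any vertex within perpendicular distance $<1$ of $\ell$ to have its foot on $\ell$ at distance at least $\sqrt 3$ from the contact point, so only a bounded part of the strip around $\ell$ needs to be inspected. Carrying this out over the one, one, two, and three edge orbits of the four tilings respectively --- each placed in explicit coordinates --- yields in every instance a minimum distance of exactly $1$, so the separating line is tangent to its nearest circles but enters the interior of none of them. The main obstacle is precisely this last verification: the extremal configurations are tight, with the nearest circles sitting exactly on the boundary of the $1$-neighbourhood of $\ell$, so the argument carries no quantitative slack and the coordinates and periods must be written down exactly rather than estimated; once that bookkeeping is complete, the two directions together give exactly the four tilings of the theorem.
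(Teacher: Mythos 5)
Your proposal follows the same core strategy as the paper: run Lemma~\ref{triangle} over the eleven convex uniform tilings and discard the seven whose vertex configurations contain a triangle, leaving $\{4,4,4\}$, $\{6,6,6\}$, $\{4,8,8\}$, and $\{4,6,12\}$. The paper's proof essentially stops there, asserting the rest of the classification by regularity degree; it never verifies that the four surviving tilings actually \emph{do} generate totally separable packings, which, as you correctly observe, does not follow from triangle-freeness of the contact graph alone, since Lemma~\ref{triangle} is only a necessary condition. Your existence direction --- identifying each separating line as the perpendicular bisector of an edge, noting that it passes through the centres of the two incident faces and is parallel to a period of the tiling, and then bounding the region to be inspected via the observation that $\|v_k-v_i\|\ge 2$ forces any vertex within perpendicular distance less than $1$ of the line to project at least $\sqrt{3}$ from the contact point --- supplies exactly the verification the paper omits, and your computation that the extremal distances equal $1$ (tangency, hence empty intersection with interiors) is consistent with the geometry of all four tilings. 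In short, your argument is the paper's argument plus the missing sufficiency check; the paper's version is shorter but leaves a genuine logical gap on the positive side that yours closes.
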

\begin{proof}
Apply Lemma \ref{triangle} to the list of 11 convex uniform tilings of $\mathbb{R}^2$; three Pythagorean
tilings and eight Keplerian tilings \cite{Olshevsky}. Clearly, if $\mathcal{P}$ is a $4$-regular totally separable packing of unit circles in $\mathbb{R}^2$
generated by a convex uniform tiling, 
then $\mathcal{P}$ is congruent to $\mathcal{P}$1. If $\mathcal{P}$ is a $3$-regular totally
separable packing of unit circles in $\mathbb{R}^2$ generated by a convex uniform tiling, then $\mathcal{P}$ is congruent to $\mathcal{P}3$, $\mathcal{K}6$,
$\mathcal{K}$9 or a subset of $\mathcal{P}$1. If $\mathcal{P}$ is a $2$-regular totally separable packing of unit circles in
$\mathbb{R}^2$ generated by a convex uniform tiling, then $\mathcal{P}$ is congruent to a subset of either $\mathcal{P}1$, $\mathcal{P}$3, $\mathcal{K}$6, or $\mathcal{K}$9.
\end{proof}

The theory of minimal area polyominoes and Euler's formula is used to provide an exact solution to the contact number problem for totally separable circle packings;
an alternative explicit proof, not relying on the results of \cite{Alonso}, which extends a proof
technique of H. Harborth \cite{Harborth} appears in \cite{BezdekSz}.

\begin{Theorem}
Given $n \in \mathbb{N}$, there exists a totally separable circle packing $\mathcal{P}_{n}$ in $\mathbb{R}^2$ with contact
number $$C(\mathcal{P}_{n}) = \Big\lfloor 2(n - \sqrt{n})\Big\rfloor.$$
Furthermore, no totally separable circle packing in $\mathbb{R}^2$ has a larger contact number.
\end{Theorem}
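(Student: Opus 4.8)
The plan is to prove the two matching inequalities separately: a construction giving a totally separable packing with $C(\mathcal{P}_n) \ge \lfloor 2(n-\sqrt{n})\rfloor$, and an upper bound $C(\mathcal{P}_n) \le \lfloor 2(n-\sqrt{n})\rfloor$ valid for every totally separable circle packing. Throughout I would use the elementary identity $\lfloor 2(n-\sqrt{n})\rfloor = 2n - \lceil 2\sqrt{n}\rceil$ (true because $2n\in\mathbb{N}$ and $\lfloor -x\rfloor = -\lceil x\rceil$), which recasts the target quantity as $2n$ minus a \emph{perimeter}.

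For the lower bound I would start from the square tiling packing $\mathcal{P}1$ of Theorem \ref{tiling2}, whose contact graph is the edge-adjacency graph of the unit-cell complex of $\mathbb{Z}^2$, and carve out the sub-packing supported on the $n$ cells of a polyomino $Q_n$ of minimal perimeter. The minimal-area polyomino theory of \cite{Alonso} (equivalently the Harary–Harborth estimate) guarantees such a $Q_n$ exists with perimeter exactly $2\lceil 2\sqrt{n}\rceil$. Counting the $4n$ unit edges of $Q_n$ with multiplicity gives $4n = 2e + p$, where $e$ is the number of interior edges and $p$ the perimeter, so $Q_n$ has $e = 2n - \lceil 2\sqrt{n}\rceil$ interior edges, and these are precisely the contacts of the sub-packing. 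Since any sub-packing of a totally separable packing is again totally separable — a common tangent line of a touching pair in the sub-packing is the very same line in $\mathcal{P}1$, and already misses every disk interior — this yields a totally separable $\mathcal{P}_n$ with $C(\mathcal{P}_n) = 2n - \lceil 2\sqrt{n}\rceil = \lfloor 2(n-\sqrt{n})\rfloor$.

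For the upper bound I would first record the local geometry forced by separability: if a unit circle $C$ touches both $C'$ and $C''$, the tangent line at the $C$–$C'$ contact must avoid the interior of $C''$, and a short computation with centres at distance $2$ shows this forces $\angle C'CC'' \ge \pi/2$; hence $G_{\mathcal{P}_n}$ has maximum degree $4$, is planar (realize each edge by the segment joining the two centres), and is triangle-free by Lemma \ref{triangle}. Reducing to the case where $G_{\mathcal{P}_n}$ is $2$-connected (the general case follows by an induction over the block decomposition, using superadditivity of $\lceil 2\sqrt{\cdot}\rceil$, and since cut vertices and bridge components cannot occur in an extremal packing), every bounded face has length $\ge 4$, so $2e \ge 4(f-1)+\ell$ where $\ell$ is the length of the outer boundary cycle; combined with Euler's formula $n - e + f = 2$ this gives $e \le 2n - 2 - \ell/2$. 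The final step is to bound the boundary from below, $\ell \ge 2\lceil 2\sqrt{n}\rceil - 4$, which is exactly where the isoperimetric content of the minimal-area polyomino theory of \cite{Alonso} enters, once the degree-$4$ forcing is used to see that the maximizing configuration may be taken grid-like, i.e. an induced subgraph of $\mathbb{Z}^2$. Feeding this back yields $e \le 2n - \lceil 2\sqrt{n}\rceil = \lfloor 2(n-\sqrt{n})\rfloor$, completing the proof together with the construction above.

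The main obstacle is this last step: passing from the soft information "totally separable, planar, triangle-free, maximum degree $4$" to the \emph{sharp} perimeter estimate. The Euler count alone is not enough, and a non-grid totally separable configuration such as a patch of the hexagonal honeycomb of Theorem \ref{tiling2} shows that the contact graph need not literally be a polyomino, so one genuinely needs either the minimal-area polyomino classification of \cite{Alonso} together with an argument that an extremal packing can be straightened into $\mathcal{P}1$ without losing contacts, or the self-contained isoperimetric argument of \cite{BezdekSz} via box-polytopes. I would present the former, invoking \cite{Alonso} both for the existence of the extremal polyomino in the lower bound and for the perimeter lower bound in the upper bound, and cross-reference \cite{BezdekSz} for the alternative route.
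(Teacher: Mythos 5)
Your proposal follows the same route as the paper: Euler's formula combined with the minimal‑perimeter polyomino theory of \cite{Alonso}. The lower‑bound half of your argument is complete and correct, and is more explicit than anything in the paper: the identity $\lfloor 2(n-\sqrt{n})\rfloor = 2n - \lceil 2\sqrt{n}\rceil$, the double count $4n = 2e + p$ applied to a polyomino of minimal perimeter $p = 2\lceil 2\sqrt{n}\rceil$, and the observation that a sub‑packing of a totally separable packing is totally separable together produce the required $\mathcal{P}_{n}$ inside $\mathcal{P}1$. Your local separability lemma (two neighbours of a circle subtend an angle at least $\pi/2$ at its centre, hence maximum degree $4$) and the girth‑$4$ Euler count $e \leq 2n - 2 - \ell/2$ are also correct as far as they go.

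The obstacle you flag at the end is, however, a genuine gap, and it is the same gap present in the paper's own proof. The paper begins by ``placing $n$ unit $2$-cubes so that elements of $\mathcal{P}_{n}$ are incircles,'' i.e.\ it silently assumes that the extremal totally separable packing \emph{is} a polyomino packing; as your own hexagonal example (Theorem \ref{tiling2}, $\mathcal{P}3$) shows, a totally separable contact graph need not embed in $\mathbb{Z}^2$, so the isoperimetric inequality $\ell \geq 2\lceil 2\sqrt{n}\rceil - 4$ cannot simply be imported from \cite{Alonso}, whose results are statements about polyominoes and not about arbitrary planar, triangle‑free, degree‑at‑most‑$4$ contact graphs of totally separable packings. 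The ``straightening'' of an extremal packing into $\mathcal{P}1$ without loss of contacts is named in your plan but not supplied, and it is precisely the nontrivial content of the upper bound; the only complete argument available is the box‑polytope/isoperimetric proof of \cite{BezdekSz}, to which the paper itself defers for the general $d$-dimensional bound. So your write‑up matches the paper in approach and exceeds it in rigor on the construction side, but to close the upper bound you must either carry out the reduction to the square lattice or cite \cite{BezdekSz} for the case $d=2$ rather than \cite{Alonso}.
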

\begin{proof}
By Euler's formula, $n - (|E| + P(c)) + a = 2$, where $|E|$ is the cardinality of the edge set of the contact graph $G_{\mathcal{P}_{n}}$,
$P(c)$ is the perimeter of the polyomino $c$ with area $a$ generated by placing $n$ unit $2$-cubes so that elements of $\mathcal{P}_{n}$ are incircles.
Interpolate the piece-wise defined function from Corollary 2.5 of \cite{Alonso} which provides the minimal perimeter of a polyomino of area $a$ in order to
obtain the desired formula.
\end{proof}

\begin{figure}[h!]
\begin{center}
\includegraphics[scale=0.7]{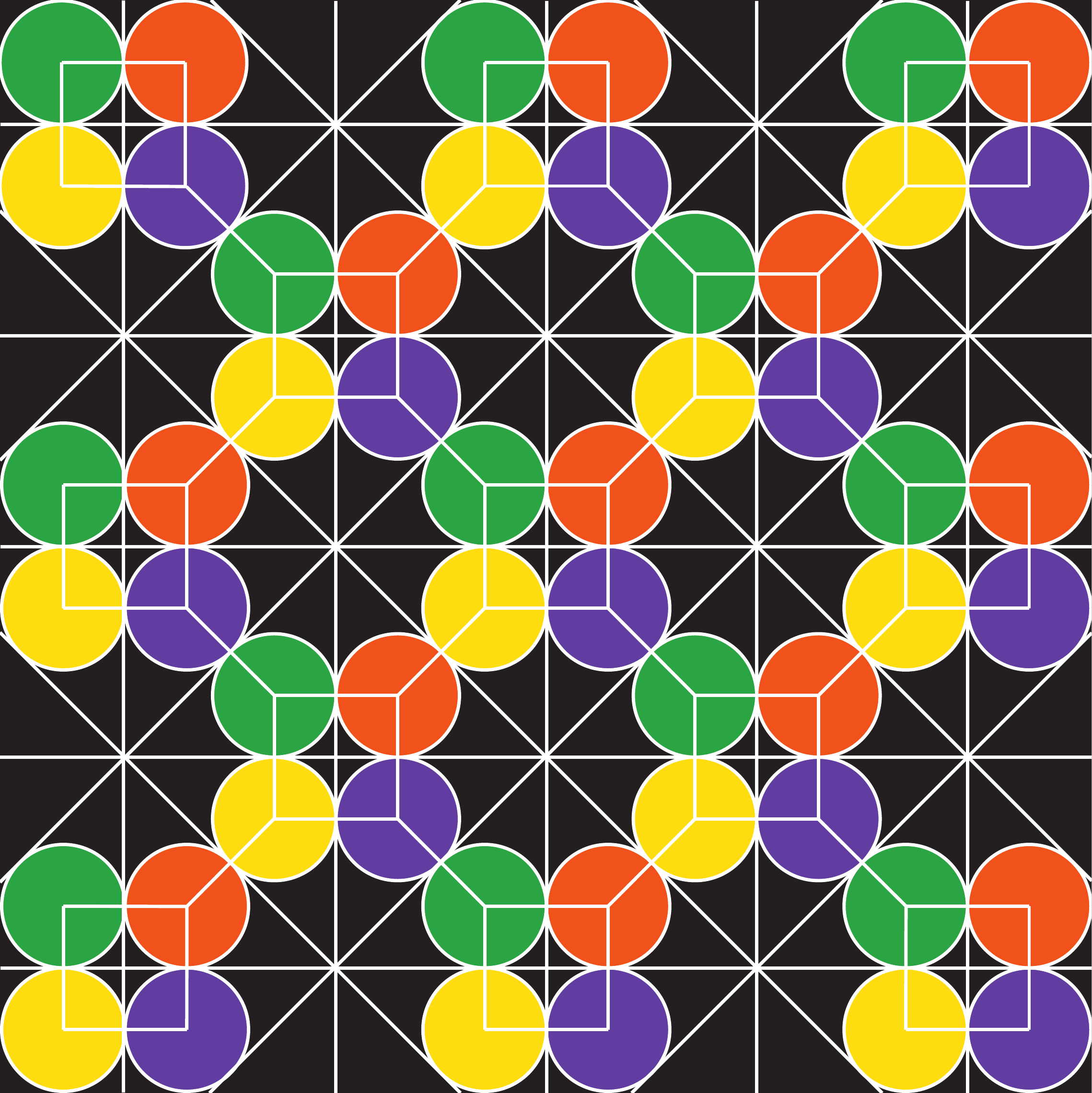}
\caption{A finite part of the contact graph, convex uniform tiling, and $3$-regular totally separable circle packing generated by the truncated square tiling.}
\end{center}
\end{figure}

\section{Regular Totally Separable Sphere Packings in $\mathbb{R}^3$}
Regular totally separable sphere packings in $\mathbb{R}^3$ which are based on convex uniform honeycombs are
classified by the following theorem.
\begin{Theorem}
There are exactly 7 convex uniform honeycombs in $\mathbb{R}^3$ which generate totally separable
sphere packings in $\mathbb{R}^3$:
\begin{enumerate}
 \item $\mathcal{J}$1 - Cubic honeycomb
 \item $\mathcal{J}$3 - Hexagonal prismatic honeycomb
 \item $\mathcal{J}$6 - Truncated square prismatic honeycomb
 \item $\mathcal{J}$9 - Omnitruncated trihexagonal prismatic honeycomb
 \item $\mathcal{J}$16 - Bitruncated cubic honeycomb
 \item $\mathcal{J}$18 - Cantitruncated cubic honeycomb
 \item $\mathcal{J}$20 - Omnitruncated cubic honeycomb
\end{enumerate}
\end{Theorem}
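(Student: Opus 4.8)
The plan is to follow the template of Theorem \ref{tiling2}: turn each candidate honeycomb into a sphere packing, apply Lemma \ref{triangle} to prune the list of convex uniform honeycombs, and then confirm the survivors really are totally separable. Rescale a convex uniform honeycomb $\mathcal{H}$ of $\mathbb{R}^3$ so that all of its edges have length $2$, and let $\mathcal{P}_{\mathcal{H}}$ be the set of unit spheres centred at the vertices of $\mathcal{H}$. Adjacent vertices then lie at distance exactly $2$ and non-adjacent vertices at distance greater than $2$, so $\mathcal{P}_{\mathcal{H}}$ is a unit sphere packing whose contact graph $G_{\mathcal{P}_{\mathcal{H}}}$ is the $1$-skeleton (edge graph) of $\mathcal{H}$; since $\mathcal{H}$ is vertex-transitive this graph is regular, and any regular totally separable sphere packing modelled on $\mathcal{H}$ is congruent to $\mathcal{P}_{\mathcal{H}}$ or to a subset of it. So the theorem amounts to deciding, for each of the $28$ convex uniform honeycombs, whether $\mathcal{P}_{\mathcal{H}}$ is totally separable.

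For necessity I would invoke Lemma \ref{triangle}: if $\mathcal{P}_{\mathcal{H}}$ is totally separable then $G_{\mathcal{P}_{\mathcal{H}}}$ contains no $k$-simplex for $2 \le k \le 3$, and since a $3$-simplex contains a $2$-simplex this is just the requirement that the $1$-skeleton of $\mathcal{H}$ have no $3$-cycle. A honeycomb is therefore disqualified as soon as some cell has a triangular $2$-face, so scanning Olshevsky's list \cite{Olshevsky} I would strike out every honeycomb possessing a tetrahedral, octahedral, cuboctahedral, icosahedral, truncated-tetrahedral, rhombicuboctahedral, triangular-prismatic, or snub cell — among them the tetrahedral--octahedral honeycomb, the triangular prismatic honeycomb together with its gyrated and elongated variants, and the rectified, truncated, cantellated, runcinated, runcitruncated, and snub cubic honeycombs. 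What survives is exactly $\mathcal{J}1,\mathcal{J}3,\mathcal{J}6,\mathcal{J}9,\mathcal{J}16,\mathcal{J}18,\mathcal{J}20$, whose cells are drawn only from the cube, the hexagonal, octagonal and dodecagonal prisms, the truncated octahedron, and the truncated cuboctahedron, all of which have only even-gonal $2$-faces; since within any convex cell a $3$-cycle of edges bounds a triangular face, the only $3$-cycles left to rule out in these seven are those whose three edges come from three different cells, and a short inspection excludes these in each case.

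For sufficiency I must show each of the seven packings is totally separable, i.e. that for adjacent vertices $u,v$ the tangent plane $H_{uv}$ at the contact point $m=\tfrac12(u+v)$ — the plane through $m$ orthogonal to $v-u$ — satisfies $\mathrm{dist}(w,H_{uv}) \ge 1$ for every other vertex $w$ of $\mathcal{H}$. For $\mathcal{J}1$ the vertices are $2\mathbb{Z}^3$, the planes $H_{uv}$ are the coordinate hyperplanes translated into $2\mathbb{Z}+1$, and the bound is immediate (with equality for the spheres in the two adjacent layers). For the prismatic honeycombs $\mathcal{J}3,\mathcal{J}6,\mathcal{J}9$, $\mathcal{H}$ is the product of $2\mathbb{Z}$ with a rescaling of one of the tilings $\mathcal{P}3,\mathcal{K}6,\mathcal{K}9$ from Theorem \ref{tiling2}: a vertical edge gives a horizontal $H_{uv}$ for which the bound is trivial, while a horizontal edge gives a vertical $H_{uv}$ and the bound reduces verbatim to the total separability of the corresponding planar packing already asserted by Theorem \ref{tiling2}. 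For $\mathcal{J}16,\mathcal{J}18,\mathcal{J}20$ one installs explicit coordinates — for instance realizing $\mathcal{J}16$ as the permutohedral (Voronoi) tessellation of the body-centred cubic lattice — and verifies the inequality on each of the finitely many orbits of edges under the symmetry group of $\mathcal{H}$.

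The principal obstacle is the case analysis in the necessity step: it rests on Olshevsky's unpublished enumeration of the $28$ honeycombs and on reading off each cell list correctly, and the ``stray $3$-cycle'' point (a triangle whose three edges belong to three distinct cells) genuinely needs to be checked rather than waved away, even though it never occurs for the seven survivors. A secondary obstacle is the coordinate verification for $\mathcal{J}16,\mathcal{J}18,\mathcal{J}20$ in the sufficiency step, which is routine but must be carried out with care, since — exactly as for the cubic honeycomb — the separating hyperplanes only barely avoid the neighbouring spheres, so there is no slack to absorb a sloppy estimate.
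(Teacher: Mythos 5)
Your proposal follows the same route as the paper's proof: Lemma \ref{triangle} is applied to the catalogue of $28$ convex uniform honeycombs (the paper attributes this list to N.~Johnson \cite{Johnson} rather than to Olshevsky, whom it cites for dimensions $2$ and $4$), every honeycomb whose $1$-skeleton contains a $3$-cycle is discarded, and the seven named honeycombs survive, sorted by the degree of regularity of the resulting packing. Where you go beyond the printed proof is in the converse direction. The paper's argument consists entirely of the pruning step and simply asserts the classification, whereas you observe --- correctly --- that absence of $2$-simplices in the contact graph is only a \emph{necessary} condition for total separability, and you supply a verification that each survivor really is totally separable: coordinate hyperplanes for $\mathcal{J}1$, reduction to the planar packings of Theorem \ref{tiling2} for the prismatic honeycombs $\mathcal{J}3$, $\mathcal{J}6$, $\mathcal{J}9$, and an orbit-by-orbit check of the tangent-plane inequality for $\mathcal{J}16$, $\mathcal{J}18$, $\mathcal{J}20$. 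Likewise your remark that a $3$-cycle in the $1$-skeleton need not bound a triangular $2$-face of a cell, so that reading off cell lists does not by itself certify triangle-freeness of the survivors, addresses a point the paper passes over in silence. Both additions are needed for the ``exactly $7$'' claim to be fully justified, and neither changes the final list; your argument is a completed version of the paper's rather than a different one.
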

\begin{proof}
Apply Lemma \ref{triangle} to N. Johnson's list of 28 convex uniform honeycombs \cite{Johnson}. Clearly, if $\mathcal{P}$
is a $6$-regular totally separable packing of unit spheres in $\mathbb{R}^3$ generated by a convex
uniform honeycomb, then
$\mathcal{P}$ is congruent to $\mathcal{J}$1. If $\mathcal{P}$ is a $5$-regular totally separable packing of unit spheres
in $\mathbb{R}^3$ generated by a convex
uniform honeycomb, then $\mathcal{P}$ is congruent to $\mathcal{J}$3, $\mathcal{J}6$, $\mathcal{J}$9, or a subset of $\mathcal{J}1$.
If $\mathcal{P}$ is a $4$-regular totally separable packing of unit spheres in $\mathbb{R}^3$ generated by a convex
uniform honeycomb, then $\mathcal{P}$ is congruent
to $\mathcal{J}16$, $\mathcal{J}18$, $\mathcal{J}20$, or a subset of either $\mathcal{J}1$, $\mathcal{J}$3, $\mathcal{J}$6, or $\mathcal{J}$9. 
If $\mathcal{P}$ is a $3$-regular, or $2$-regular totally 
separable packing of unit spheres in $\mathbb{R}^3$ generated by a convex
uniform honeycomb, then $\mathcal{P}$ is congruent to a subset of either $\mathcal{J}$1,
$\mathcal{J}$3, $\mathcal{J}$6, $\mathcal{J}$9, $\mathcal{J}$16, $\mathcal{J}$18, or $\mathcal{J}$20.
\end{proof}

\section{Regular Totally Separable Sphere Packings in $\mathbb{R}^4$}
Regular totally separable sphere packings in $\mathbb{R}^4$ based on convex uniform 4-honeycombs are
classified by the following theorem.

\begin{Theorem}\label{R4}
There are exactly $18$ convex uniform tetracombs in $\mathbb{R}^4$ which generate totally separable
sphere packings in $\mathbb{R}^4$:
\begin{enumerate}
 \item $\mathcal{O}$1 - Tesseractic tetracomb
 \item $\mathcal{O}$3 - Square-hexagonal duoprismatic tetracomb
 \item $\mathcal{O}$6 - Tomosquare-square duoprismatic tetracomb
 \item $\mathcal{O}$9 - Omnitruncated-trihexagonal-square duoprismatic tetracomb
 \item $\mathcal{O}$16 - Bitruncated-cubic prismatic tetracomb
 \item $\mathcal{O}$18 - Cantitruncated-cubic prismatic tetracomb
 \item $\mathcal{O}$20 - Omnitruncated-cubic prismatic tetracomb
 \item $\mathcal{O}$39 - Hexagonal duoprismatic tetracomb
 \item $\mathcal{O}$42 - Hexagonal-tomosquare duoprismatic tetracomb
 \item $\mathcal{O}$45 - Hexagonal-omnitruncated-trihexagonal duoprismatic tetracomb
 \item $\mathcal{O}$63 - Tomosquare duoprismatic tetracomb
 \item $\mathcal{O}$66 - Tomosquare-omnitruncated-trihexagonal duoprismatic tetracomb
 \item $\mathcal{O}$78 - Omnitruncated-trihexagonal duoprismatic tetracomb
 \item $\mathcal{O}$99 - Truncated icositetrachoric tetracomb
 \item $\mathcal{O}$100 - Great diprismatotesseractic tetracomb
 \item $\mathcal{O}$103 - Omnitruncated tesseractic tetracomb
 \item $\mathcal{O}$132 - Omnitruncated icositetrachoric tetracomb
 \item $\mathcal{O}$140 - Great-prismatodecachoric tetracomb
\end{enumerate}
\end{Theorem}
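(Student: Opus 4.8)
The plan is to run the same argument that established Theorem~\ref{tiling2} and its three-dimensional counterpart, now over Olshevsky's list of convex uniform tetracombs (some $143$ of them, \cite{Olshevsky}). Every convex uniform tetracomb $X$ is vertex-transitive and has all edges of one common length, which I rescale to $2$; placing a unit sphere at each vertex of $X$ then gives a unit sphere packing whose contact graph is precisely the $1$-skeleton of $X$. By Lemma~\ref{triangle} such a packing can be totally separable only if that $1$-skeleton contains no $2$-simplex, so a first pass discards every tetracomb having a triangular $2$-face (for convex uniform tetracombs the two conditions in fact coincide, and in any case a stray triangle is caught by the distance test below). To turn this necessary condition into the exact list I need the converse for the survivors, for which the key elementary fact is: the tangent hyperplane to the two unit spheres centred at the endpoints $u,v$ of an edge is the perpendicular bisector hyperplane $H_{uv}$, and the distance from an arbitrary vertex $w$ of $X$ to $H_{uv}$ equals $\frac14\bigl|\,d(u,w)^{2}-d(v,w)^{2}\,\bigr|$; hence the packing is totally separable exactly when $|d(u,w)^{2}-d(v,w)^{2}|\ge 4$ for every edge $uv$ and every vertex $w$, a condition I can verify one edge-orbit at a time because $X$ is vertex-transitive and locally finite.

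Most of the survivors should come for free from products. If $T_1\subset\R^{d_1}$ and $T_2\subset\R^{d_2}$ are uniform tessellations whose vertex packings are totally separable, then the duoprism $T_1\times T_2\subset\R^{d_1+d_2}$ is again a uniform tessellation, its $1$-skeleton is the Cartesian product $G_{T_1}\,\square\,G_{T_2}$ of the two $1$-skeletons --- which is triangle-free exactly when both factors are --- and $H_{(u,v)(u',v)}=H_{uu'}\times\R^{d_2}$, so the distance criterion for $T_1\times T_2$ decouples into the criteria for $T_1$ and for $T_2$; therefore $T_1\times T_2$ is totally separable. Feeding in the four totally separable plane tilings $\mathcal{P}1,\mathcal{P}3,\mathcal{K}6,\mathcal{K}9$ of Theorem~\ref{tiling2}, one unordered pair at a time, produces the ten duoprismatic tetracombs $\mathcal{O}1,\mathcal{O}3,\mathcal{O}6,\mathcal{O}9,\mathcal{O}39,\mathcal{O}42,\mathcal{O}45,\mathcal{O}63,\mathcal{O}66,\mathcal{O}78$; feeding in the three genuinely three-dimensional totally separable honeycombs $\mathcal{J}16,\mathcal{J}18,\mathcal{J}20$ (the remaining four members of the $\R^{3}$ list being themselves prisms over the plane tilings) and multiplying by $\Z$ produces $\mathcal{O}16,\mathcal{O}18,\mathcal{O}20$. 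Conversely, any uniform tetracomb that splits as such a product but fails to appear here inherits a triangular $2$-face from a factor and is excluded by Lemma~\ref{triangle}.

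It then remains to run the triangle-free sieve over the tetracombs that do not split as a product, namely those attached to the irreducible affine Coxeter groups of rank $5$ together with their truncations. Here I would drop a tetracomb the moment one of its three-dimensional cells carries a triangle --- a truncated cube $3.8.8$, a truncated tetrahedron $3.6.6$, a triangular prism $3.4.4$, any antiprism, and so on --- keeping only those all of whose cells lie among the cube, the truncated octahedron, the truncated cuboctahedron, and the $2m$-gonal prisms. Carrying this out should leave exactly the five tetracombs $\mathcal{O}99$, $\mathcal{O}100$, $\mathcal{O}103$, $\mathcal{O}132$, $\mathcal{O}140$ --- the deeply truncated and omnitruncated tetracombs of the tesseractic, icositetrachoric, and pentachoric families --- and for each of these I finish by checking the bound $\frac14|d(u,w)^{2}-d(v,w)^{2}|\ge 1$ on every edge-orbit, a finite computation once the cell structure of the relevant omnitruncated polychora is written down. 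Together with the thirteen product cases this gives precisely the eighteen tetracombs in the statement.

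The mathematical content is light, and I expect the whole difficulty to sit in the bookkeeping: one leans on the completeness and correctness of the (unpublished) enumeration in~\cite{Olshevsky}, whose exact size has historically been delicate; the triangle-free sieve must be applied to the entire list; and in the last five cases the cell structures of the omnitruncated polychora have to be pinned down with care. This is exactly the same enumeration-dependent case analysis that underlies the $\R^{2}$ and $\R^{3}$ theorems, carried out one dimension higher.
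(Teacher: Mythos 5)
Your proposal is correct and follows the same basic route as the paper: the paper's entire argument is to apply Lemma~\ref{triangle} as a sieve to Olshevsky's list of $143$ convex uniform tetracombs and then sort the $18$ survivors by the degree of regularity read off from their vertex figures. Where you go beyond the paper is on the converse direction. Lemma~\ref{triangle} only gives a \emph{necessary} condition (no $2$-simplex in the contact graph), and the paper never verifies that the $18$ surviving tetracombs actually yield totally separable packings --- it simply asserts the classification. You supply that missing half twice over: first with the exact criterion that the distance from a vertex $w$ to the tangent hyperplane of the edge $uv$ is $\frac14\bigl|\,d(u,w)^{2}-d(v,w)^{2}\,\bigr|$, reducing total separability to a finite check per edge-orbit, and second with the observation that total separability is preserved under duoprismatic products because the $1$-skeleton of $T_1\times T_2$ is the Cartesian product of the skeletons and the tangent hyperplanes split as $H_{uu'}\times\R^{d_2}$. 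That product argument disposes of $13$ of the $18$ cases structurally (the $10$ duoprisms over the four planar tilings of Theorem~\ref{tiling2} and the $3$ prisms over $\mathcal{J}16$, $\mathcal{J}18$, $\mathcal{J}20$), leaving only the five irreducible omnitruncated/deeply truncated tetracombs for direct verification. Both you and the paper ultimately lean on the completeness of the unpublished enumeration in \cite{Olshevsky} --- the paper even acknowledges that three cases were initially missed --- so neither argument is self-contained there, but your version is the more rigorous of the two.
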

\begin{proof}
Apply Lemma \ref{triangle} to G. Olshevsky's list of 143 convex uniform 4-honeycombs \cite{Olshevsky}.
Clearly, if $\mathcal{P}$ is a $8$-regular totally separable packing of unit spheres in $\mathbb{R}^4$ generated by a convex
uniform tetracomb, then
$\mathcal{P}$ is congruent to $\mathcal{O}$1. If $\mathcal{P}$ is a $7$-regular totally separable packing of unit spheres
in $\mathbb{R}^4$ generated by a convex
uniform tetracomb, then $\mathcal{P}$ is congruent to $\mathcal{O}$3, $\mathcal{O}$6, $\mathcal{O}$9, or a subset of
$\mathcal{O}$1. If $\mathcal{P}$ is a $6$-regular totally separable packing of unit spheres in $\mathbb{R}^4$ generated by a convex
uniform tetracomb, then $\mathcal{P}$ is congruent
to $\mathcal{O}$16, $\mathcal{O}$18, $\mathcal{O}$20, $\mathcal{O}$39, $\mathcal{O}$42, $\mathcal{O}$45, $\mathcal{O}$63,
$\mathcal{O}$66, $\mathcal{O}$78, or a subset of either $\mathcal{O}$1, $\mathcal{O}$3, $\mathcal{O}$6, or $\mathcal{O}$9.
If $\mathcal{P}$ is a $5$-regular totally separable packing of unit spheres in $\mathbb{R}^4$ generated by a convex
uniform tetracomb, then $\mathcal{P}$ is congruent
to $\mathcal{O}$99, $\mathcal{O}$100, $\mathcal{O}$103, $\mathcal{O}$132, $\mathcal{O}$140, or a subset of either $\mathcal{O}$1, 
$\mathcal{O}$3, $\mathcal{O}$6, $\mathcal{O}$9, $\mathcal{O}$16, $\mathcal{O}$18, $\mathcal{O}$20, $\mathcal{O}$39,
$\mathcal{O}$42, $\mathcal{O}$45, $\mathcal{O}$63, $\mathcal{O}$66, or $\mathcal{O}$78. If $\mathcal{P}$ is a $4$-regular,
$3$-regular, or $2$-regular totally separable packing of unit spheres in $\mathbb{R}^4$ generated by a convex
uniform tetracomb, then $\mathcal{P}$ is congruent
to a subset of either $\mathcal{O}$1, $\mathcal{O}$3, $\mathcal{O}$6, $\mathcal{O}$9, $\mathcal{O}$16, $\mathcal{O}$18,
$\mathcal{O}$20, $\mathcal{O}$39, $\mathcal{O}$42, $\mathcal{O}$45, $\mathcal{O}$63, $\mathcal{O}$66, $\mathcal{O}$78,
$\mathcal{O}$99, $\mathcal{O}$100, $\mathcal{O}$103, $\mathcal{O}$132, or $\mathcal{O}$140.
\end{proof}

The regularity of each $4$-honeycomb is determined by inspecting the number of vertices of the vertex figure
associated with the honeycomb, e.g., the vertex figure of $\mathcal{O}$100 is an irregular pentachoron, implying that the
4-dimensional sphere packing generated by the great diprismatotessseractic tetracomb is $5$-regular.

\section{Totally Separable Sphere Packings in $\mathbb{R}^d$}
Totally separable sphere packings in $\mathbb{R}^d$ are studied and future research directions are outlined.
The following heuristics for the upper bound to the contact number problem for totally separable sphere 
packings in $\mathbb{R}^d$ provides a reasonable intuitive explanation of the following theorem.

From the formula for the number of $m$-cubes on the boundary of a $d$-cube for $m=1$ observe that
$$2^{d-1} {d \choose 1} = \Big\lfloor d \left(2^d - (2^d)^{\frac{d-1}{d}} \right)\Big\rfloor = \Big\lfloor d \left(n - n^{\frac{d-1}{d}} \right)\Big\rfloor$$
for $n=2^{d}$. Similarly, for any $n=\sqrt[d]{k} \in \mathbb{N}$ there is a $\underbrace{k \times k \times \cdot\cdot\cdot \times k}_{\text{d times}}$
$d$-cube with $\Big\lfloor d \left(k^{d} - (k^{d})^{\frac{d-1}{d}}\right) \Big\rfloor$ edges, implying that the upper bound in the following
theorem is an equality. Assume that $k^{d} < n < (k+1)^{d}$ and observe that the upper bound on $c(n,d)$ overestimates the supremum
over edge cardinalities of $(k+\delta_{1}) \times (k + \delta_{2}) \times \cdot\cdot\cdot \times (k + \delta_{d})$ 
unit polyominoes with $n$ cells, where $\delta_{i} \in \{0,1\}$.

\begin{Theorem}
For $n \in \mathbb{N}$, $$c(n,d) \leq \Big\lfloor d \left(n - n^{\frac{d-1}{d}}\right)\Big\rfloor,$$
with equality when $\sqrt[d]{n} \in \mathbb{N}$.
\end{Theorem}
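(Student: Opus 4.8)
The plan is to treat the statement in two halves: the general upper bound, which is the substantial part, and the equality when $\sqrt[d]{n}\in\mathbb{N}$, which follows once a matching construction is produced. I would begin with the easy half. Suppose $n=k^{d}$ with $k\in\mathbb{N}$ and take $\mathcal{P}_{n}=\bigcup_{i=1}^{n}\left(x_{i}+\mathbb{S}^{d-1}\right)$ with the centres $x_{i}$ running over $\{0,2,4,\dots,2(k-1)\}^{d}$. Distinct centres differ by at least $2$ in some coordinate, so this is a unit sphere packing; two of its spheres touch exactly when their centres agree in all but one coordinate $\ell$, where they differ by $2$, and then the hyperplane $\{x_{\ell}=m\}$ through the intervening odd integer $m$ separates them and, being at an odd integer distance from every (even-integer) centre, lies at distance $\geq 1$ from every sphere and so misses every sphere's interior; thus $\mathcal{P}_{n}$ is totally separable in the sense of the definition above.

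Its contact graph $G_{\mathcal{P}_{n}}$ is the Cartesian $d$-fold power of the path $P_{k}$ on $k$ vertices. From $|E(G\,\square\,H)|=|V(G)|\cdot|E(H)|+|E(G)|\cdot|V(H)|$ and induction on $d$, this graph has exactly $d\,k^{d-1}(k-1)=d\left(k^{d}-k^{d-1}\right)$ edges. Since $n^{(d-1)/d}=k^{d-1}$ is a positive integer, $d\left(k^{d}-k^{d-1}\right)=d\left(n-n^{(d-1)/d}\right)=\left\lfloor d\left(n-n^{(d-1)/d}\right)\right\rfloor$, so $C(\mathcal{P}_{n})$ meets the asserted upper bound and equality holds for every $n$ with $\sqrt[d]{n}\in\mathbb{N}$.

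For the upper bound the template is the cubic-honeycomb picture from the paragraph preceding the theorem: if the $n$ spheres are inscribed in face-adjacent unit $d$-cubes, each contact is a shared cube-facet, every cube has exactly $2d$ facets, the number of exposed boundary facets is $2dn-2C(\mathcal{P}_{n})$, and the edge-isoperimetric inequality on $\mathbb{Z}^{d}$---the boundary of an $n$-cell polyomino has at least $2dn^{(d-1)/d}$ facets, with axis-aligned cubes extremal---yields $2dn-2C(\mathcal{P}_{n})\geq 2dn^{(d-1)/d}$, whence $C(\mathcal{P}_{n})\leq d(n-n^{(d-1)/d})$ and, by integrality, the floor. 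For an arbitrary totally separable packing the contacts need not be axis-parallel, so this combinatorial picture must be replaced by its metric counterpart, and here I would appeal to the argument of \cite{BezdekSz}: using total separability together with Lemma \ref{triangle}, adjoin to the common tangent hyperplanes at the contact points a large bounding box, and from the resulting hyperplane arrangement extract a box-polytope $Q\supseteq\bigcup_{i}\left(x_{i}+\mathbb{S}^{d-1}\right)$ partitioned into box-polytopes $Q_{1},\dots,Q_{n}$, each containing a unit ball and separated from its neighbours by facets lying in those tangent hyperplanes; the facet-count of the polyomino argument is then traded for a volume-and-surface-area estimate built on $\vol(Q)=\sum_{i}\vol(Q_{i})\geq 2^{d}n$ and the isoperimetric inequality for box-polytopes, tuned so that $C(\mathcal{P}_{n})\leq d\left(n-n^{(d-1)/d}\right)$ persists, after which integrality once more supplies the floor.

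The single hard step is this reduction---the construction and analysis of the box-polytope partition carried out in \cite{BezdekSz}. The polyomino version is clean only because each cell has exactly $2d$ facets and ``boundary size'' is literally a facet count, whereas the cells $Q_{i}$ can have many facets of varying $(d-1)$-volume; making the isoperimetric deficit of $Q$ pay for all of the contact facets, and no more, is exactly where the delicate work sits. Everything else---the grid construction, the Cartesian-product edge count, the discrete isoperimetric inequality on $\mathbb{Z}^{d}$, and the rounding at the end---is routine.
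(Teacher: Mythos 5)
Your proposal is correct and takes essentially the same route as the paper: the upper bound is deferred to the box-polytope and isoperimetric argument of \cite{BezdekSz}, exactly as the paper's own proof does, and your explicit grid construction for the equality case is just a rigorous version of the $k \times k \times \cdots \times k$ cubic-honeycomb heuristic given in the paragraph preceding the theorem. The only difference is that you carry out the edge count and the total-separability check for the equality case in full detail, which the paper leaves implicit.
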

\begin{proof}
Improving upon an earlier and lengthier unpublished case analytic proof, K.~Bezdek, B.~Szalkai, and I.~Szalkai provide an elegant proof using box-polytopes and the isoperimetric inequality \cite{BezdekSz}.
\end{proof}

The classification of uniform $d$-honeycombs is incomplete, leading to great difficulty in establishing
the above characterizations of totally separable sphere packings in $d=2,3,4$ for $d \geq 5$. The ongoing
work by J. Bowers, G. Olshevsky, N. Johnson, and others of classifying uniform polyterons will soon
result in the complete classification of uniform $5$-honeycombs, and the study of uniform polypetons
generating uniform $6$-honeycombs has only recently begun. For $d \geq 7$ there
appears to be no significant work on uniform honeycombs; although R. Klitzing has classified certain uniform polytopes up to $d=8$ \cite{Klitzing}. Future research on the topic of regular totally separable sphere packings should include a comprehensive construction 
of families of $k$-regular totally separable sphere packings in $\mathbb{R}^d$ for $3 \leq k \leq 2d - 1$ and $d \geq 5$. 
These are the unknown bounds on $k$-regularity because for $k=2$ spheres can be placed along an apeirogon (infinite line with evenly spaced points) and for $k=2d$
spheres can be placed on the cubic $d$-honeycomb. For an example to motivate future research in this
direction, a construction in $\mathbb{R}^d$ of a $(d+1)$-regular totally separable sphere packing which
is not based on a convex uniform $d$-honeycomb for $d \geq 3$ is presented. A similar
construction would be desired for $3 \leq k \leq d$ and $d+2 \leq k \leq 2d-1$; regardless of whether or not it is based
on a convex uniform $d$-honeycomb.

\begin{Theorem}
There exists a $(d+1)$-regular totally separable sphere packing in $\mathbb{R}^d$ for $d \geq 3$ which is not based
on a convex uniform $d$-honeycomb.
\end{Theorem}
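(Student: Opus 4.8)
The plan is to reduce the problem to low dimension by an \emph{extrusion} operation. For a sphere packing $S$ in $\mathbb{R}^m$ put $\mathrm{Ext}(S)=\{\,s+2k\,e_{m+1}\ :\ s\in S,\ k\in\mathbb{Z}\,\}\subset\mathbb{R}^{m+1}$. The first step is an \emph{extrusion lemma}: if $S$ is totally separable, then so is $\mathrm{Ext}(S)$, and every sphere of $\mathrm{Ext}(S)$ has exactly two more contacts than the corresponding sphere $s$ of $S$ (its two column neighbours). The count of contacts is immediate from $\|(s+2k e_{m+1})-(s'+2k' e_{m+1})\|^2=\|s-s'\|^2+4(k-k')^2$, so a contact forces either $s=s'$ and $|k-k'|=1$ or $s,s'$ in contact in $S$ and $k=k'$. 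For separability one inspects the two kinds of tangent hyperplane: a ``vertical'' contact $\{s+2k e_{m+1},s+2(k+1)e_{m+1}\}$ has tangent hyperplane $\{x_{m+1}=2k+1\}$, whose distance to any centre of $\mathrm{Ext}(S)$ is an odd integer in absolute value, hence at least $1$; a ``horizontal'' contact $\{s+2k e_{m+1},s'+2k e_{m+1}\}$ has a tangent hyperplane of the form $H\times\mathbb{R}e_{m+1}$ where $H$ is the tangent hyperplane of the contact $\{s,s'\}$ in $\mathbb{R}^m$, so its distance to $t+2k' e_{m+1}$ equals the distance from $t$ to $H$, which is at least $1$ because $S$ is totally separable. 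Thus $\mathrm{Ext}(S)$ is totally separable and regularity is preserved with the degree raised by $2$.

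Granting the extrusion lemma, it suffices to construct, for each $d\ge 3$, a totally separable $r$-regular packing $B$ in some $\mathbb{R}^m$ with $m\le d$ and $r+2(d-m)=d+1$, after which $\mathrm{Ext}^{\,d-m}(B)$ is the desired $(d+1)$-regular totally separable packing of $\mathbb{R}^d$ — provided one also arranges that $B$, hence the extrusion, is not based on a convex uniform honeycomb. For $d$ odd take $m=(d+1)/2$ and $r=2$: let $B$ be a bi-infinite ``gently winding path'' of unit spheres contained in a $2$-plane, i.e. centres $p_{k+1}=p_k+2u_k$ with $u_k$ unit vectors whose consecutive angles $\theta_k$ are positive, pairwise distinct, summable, and tending to $0$. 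For $d$ even take $m=(d+2)/2\ (\ge 3)$ and $r=3$: let $B$ be such a winding path together with its translate by $2e_3$, a ``winding ladder''. One must verify that $B$ is a genuine packing, is $r$-regular, and is totally separable; the quantitative input is that when $\sum_k\theta_k$ is small the path is quasi-geodesic, so that for each contact $\{p_k,p_{k+1}\}$ one has $\langle p_j-p_k,u_k\rangle\le 0$ for $j\le k$ and $\langle p_j-p_k,u_k\rangle\ge 2$ for $j\ge k+1$; then no centre lies in the open width-$2$ slab about the tangent hyperplane, which is exactly total separability (the rungs of the ladder being handled like the vertical contacts above).

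Finally one must show the construction is ``not based on a convex uniform $d$-honeycomb'', read as in the earlier theorems: not congruent to a subset of the vertex set of any convex uniform $d$-honeycomb. The certificate is that the set of squared distances realised by $B$ has a finite accumulation point; indeed $\|p_k-p_{k+2}\|^2=8(1+\cos\theta_k)$ is an infinite set of distinct values converging to $16$, and this is inherited by $\mathrm{Ext}(S)$ since the squared-distance set only grows under extrusion. On the other hand the vertex set $V(H)$ of a convex uniform $d$-honeycomb $H$ is a single orbit of a crystallographic group, hence a finite union of translates of a lattice, so its squared-distance set is closed and discrete; therefore no isometric copy of a subset of $V(H)$ can have a finite accumulation point in its squared-distance set, and our packing is not such a subset. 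I expect the main obstacle to be not any individual estimate but pinning down this last point rigorously in arbitrary dimension: settling precisely what ``based on a convex uniform $d$-honeycomb'' should mean and justifying that convex uniform honeycombs are crystallographic, so that the accumulation-point argument applies; the embeddedness and total separability of the winding base are then routine, but need to be checked with care.
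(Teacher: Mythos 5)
Your proposal is correct in outline but takes a genuinely different route from the paper's. The paper gives a single explicit self-similar construction: spheres are placed at the vertices of a unit $d$-cube, $2^d$ further unit $d$-cubes are attached diagonally outward from its vertices at distance $2$ along the lines from the centroid, and the process is iterated; each sphere then touches the $d$ spheres at adjacent vertices of its own cube plus the one sphere attached diagonally, giving $(d+1)$-regularity, and the ``not based on a convex uniform $d$-honeycomb'' clause is discharged by identifying the result with a scaliform (vertex-transitive but non-uniform) honeycomb containing an elongated cubic bifrustum. You instead factor the problem through an extrusion lemma --- degree increases by exactly $2$ and total separability is preserved, and your verification of both halves (the distance computation $\|s-s'\|^2+4(k-k')^2$ forcing only column and in-level contacts, and the odd-integer-distance argument for the vertical tangent hyperplanes) is correct --- and reduce to a $2$- or $3$-regular ``winding'' base in roughly half the dimension, with the parity of $d$ handled by the choice of path versus ladder; the arithmetic $r+2(d-m)=d+1$ checks out in both cases. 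Your certificate for non-uniformity (an accumulation point in the squared-distance set, via $\|p_k-p_{k+2}\|^2=8(1+\cos\theta_k)$, which no subset of a crystallographic vertex orbit can exhibit) is a cleaner and strictly stronger exclusion than the paper's identification argument, though it does require justifying that convex uniform honeycombs are crystallographic (which follows from vertex-transitivity plus boundedness of the cells via Bieberbach, as you anticipate). What the paper's approach buys is a vertex-transitive, hence more symmetric, example tied to a concrete scaliform, at the cost of a sketchy verification; what yours buys is rigor on the exclusion clause at the cost of a deliberately non-symmetric example. The details you flag (embeddedness and separability of the winding path when the total turning is below, say, $\pi/3$) are real but routine.
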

\begin{proof}
Let $Q_{0}^{d} = \conv\left\{x_{0,1},...,x_{0,2^{d}}\right\}$ be a unit $d$-cube in $\mathbb{R}^d$ and place $2^{d}$ unit $d$-cubes
\begin{align*}
 Q_{1}^{d} &= \conv\left\{x_{1,1},...,x_{1,2^{d}}\right\} \\
 &\vdots \\
 Q_{2^{d}}^{d} &= \conv\left\{x_{2^{d},1},...,x_{2^{d},2^{d}}\right\}
\end{align*}
so that $\|x_{0,1} - x_{1,1}\| = 2,..., \|x_{0,2^{d}} - x_{2^{d},1}\|=2$ with $x_{i,1}$ lying outside $Q_{0}^d$ along a line
emanating from the centroid of $Q_{0}^{d}$ through $x_{0,i}$ for $1 \leq i \leq 2^{d}$. Now construct
$$\mathcal{P}_{2^{d} + 4^{d}} = \bigcup_{i=1}^{2^{d} + 4^{d}} \bigcup_{j=1}^{2^{d}}\left(x_{i,j} + \mathbb{S}^{d-1}\right)$$
and iteratively place $2^{d}-1$ unit $d$-cubes diagonally out of each existing unit $d$-cube $Q_{1}^{d},..., Q_{2^{d}}^{d}$
as above so that spheres may be placed around their vertices which generate a packing congruent to $\mathcal{P}_{2^d + 4^d}$.
Indefinitely extending this procedure leads to an infinite totally separable sphere packing which is $(d+1)$-regular. For, let
$x + \mathbb{S}^{d-1}$ be an arbitrary sphere in this packing and observe that it touches $d$ other spheres placed on adjacent
vertices of the unit $d$-cube which $x$ is a vertex of, and also touches $1$ other sphere which is diagonally outward as in
the construction. Furthermore, for $d=2$ this construction corresponds to the truncated square tiling $\mathcal{K}$6
and for $d \geq 3$ this construction corresponds to a scaliform which contains an elongated cubic bifrustum.
\end{proof}

The classification of regular totally separable sphere packings which are not based on convex uniform $3$-honeycombs is then
a sub-problem of classifying all scaliforms (vertex-transitive honeycombs) in $\mathbb{R}^3$; from a simplex-free scaliform
in $\mathbb{R}^3$ one can construct a totally separable sphere packing by placing equal size spheres at the vertices. The 
questionable existence of aperiodic totally separable sphere packings in any dimension remains unexplored.

\begin{Conjecture}
No aperiodic totally separable sphere packing exists in any dimension.
\end{Conjecture}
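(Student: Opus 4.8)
The plan is to argue by contradiction, after first sharpening the statement. As literally written the conjecture is false for a trivial reason: an infinite subset of a totally separable packing is again totally separable (this is the observation used repeatedly in Theorems~2--5 above), and a generic infinite subset of the cubic lattice $\mathbb{Z}^d$ has no nonzero period. So I would begin by reading ``aperiodic'' in the intended nontrivial sense --- a packing of finite local complexity that is repetitive yet admits no nonzero translational symmetry, equivalently one whose hull in the local topology contains no periodic packing --- so that the question becomes whether total separability is \emph{rigid} enough to force a period from finite local data, in contrast to the situation for Wang tiles or Penrose tilings. I would also concentrate on the principal case of a packing of congruent unit spheres, the general case reducing to it after normalization or being attacked in parallel.

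The engine of the whole argument should be a local angle lemma. Suppose $x+\mathbb{S}^{d-1}$ touches both $y+\mathbb{S}^{d-1}$ and $z+\mathbb{S}^{d-1}$ in a totally separable unit packing. The hyperplane $H$ tangent to the pair $\{x,y\}$ at their contact point is the perpendicular bisector of $[x,y]$, and a short computation using $\|y-x\|=\|z-x\|=2$ gives $\mathrm{dist}(z,H)=\tfrac12\bigl|\langle z-x,\,y-x\rangle-2\bigr|$; total separability (the definition above) forces this to be $\ge 1$, hence $\langle z-x,\,y-x\rangle\le 0$, since the other alternative $\langle z-x,\,y-x\rangle\ge 4$ is impossible for distinct vectors $z-x$ and $y-x$ each of length $2$. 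Thus the neighbors of any sphere subtend pairwise angles $\ge\pi/2$ at its center. Because $\mathbb{R}^d$ admits at most $2d$ unit vectors with pairwise nonpositive inner products --- and exactly $2d$ only as $\pm e_1,\dots,\pm e_d$ for an orthonormal frame --- every vertex of $G_{\mathcal P}$ has degree $\le 2d$ (recovering rigorously the bound $2d$ invoked in Section~5), and in the $2d$-regular case every sphere sees its neighbors along a $\pm$-frame.

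From here the program splits. In the $2d$-regular case I would try to propagate the local frames: a sphere and a neighbor share one frame axis, and one would aim to show --- ruling out twisted candidates, which already takes care --- that the frame is globally constant, whence $\mathcal P=\mathbb{Z}^d$ up to congruence and is periodic. In the subregular case, where the angle lemma leaves genuine freedom, I would pass to the arrangement $\mathcal H$ of all contact hyperplanes: it is locally finite, each open sphere lies in a single open cell of $\mathcal H$, and the hyperplane separating any contacting pair belongs to $\mathcal H$. The goal would then be to show that the members of $\mathcal H$ fall into boundedly many families of mutually parallel hyperplanes --- each a stack of slabs of thickness $\ge 2$ --- from which a full-rank lattice of periods can be extracted, contradicting aperiodicity. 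Establishing this ``boundedly many parallel families'' dichotomy is the crux and the step I expect to be the main obstacle: it can only be true because a contact hyperplane is tangent to every sphere it passes near, so that two non-parallel contact hyperplanes ought to force, via the angle lemma applied along a chain of contacts running between them, a metric contradiction; without such a rigidity statement the conjecture degenerates into the false general principle that finite local complexity implies periodicity. I would therefore treat this as a genuine fork: either prove the parallel-families rigidity and conclude, or --- what I think is more plausible for $d\ge 2$ --- disprove the conjecture by decorating a substitution tiling (golden-ratio rectangles in the plane, a cubical or pinwheel substitution in higher dimensions, or a ``twisted column'' stacking of cubic layers) with congruent spheres placed so that every two contacts at a common sphere subtend an angle $\ge\pi/2$ and no triangle arises in the contact graph, producing a totally separable packing of finite local complexity with no period.
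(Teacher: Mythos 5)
The statement you are attacking is presented in the paper as a \emph{Conjecture} with no proof; the author explicitly writes that the existence of aperiodic totally separable packings ``remains unexplored,'' so there is no argument in the paper to compare yours against. Judged on its own terms, your proposal is a research program rather than a proof, and you say as much. What you do establish is sound and worth keeping. The angle lemma is correct: with contact point $\tfrac12(x+y)$ and normal direction $y-x$ one gets $\mathrm{dist}(z,H)=\tfrac12\left|\langle z-x,\,y-x\rangle-2\right|$, and requiring this to be at least $1$, together with the Cauchy--Schwarz exclusion of $\langle z-x,\,y-x\rangle\ge 4$ for $z\ne y$, forces pairwise nonpositive inner products among the contact directions at any one sphere; the classical fact that $\mathbb{R}^d$ admits at most $2d$ nonzero vectors with pairwise nonpositive inner products then gives the degree bound $2d$ that Section~5 of the paper invokes without justification. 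Your opening observation is also correct and is the sharpest point in the proposal: deleting spheres from a totally separable packing only removes tangent hyperplanes and removes interiors to be avoided, so any infinite non-periodic subset of the cubic lattice packing already falsifies the conjecture under the literal reading of ``aperiodic,'' and the statement is only meaningful after a reformulation such as the one you give (finite local complexity, repetitivity, no nonzero period).

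The gap is the step you yourself flag as the crux: the claim that the arrangement of contact hyperplanes splits into boundedly many families of parallel hyperplanes, from which a full-rank lattice of periods would follow. Nothing in the proposal advances this beyond naming it, and the angle lemma alone cannot deliver it, because that lemma is a condition at a single sphere and says nothing about how contact directions at distant, non-adjacent spheres are correlated; some genuinely global propagation argument (or a counterexample) is required, and the $2d$-regular frame-propagation sketch is itself left incomplete (the sentence ruling out ``twisted candidates'' trails off). Your final paragraph then abandons the proof attempt and speculates that the conjecture is false for $d\ge 2$ via sphere decorations of substitution tilings, but no such decoration is constructed or verified against the angle lemma. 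As it stands the proposal neither proves nor refutes the statement; its real content is the precise reformulation needed to make the conjecture non-trivial, plus the angle lemma, both of which would strengthen the paper's Section~5 but leave the conjecture exactly as open as the paper leaves it.
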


\section*{Appendix: Separability as a Geometric Measure}
Separability is introduced as a geometric measure where inseparable sphere packings have a separability of $0$ and 
totally separable sphere packings have a separability of $1$. Let $H_{e}$ denote the tangent hyperplane to a pair of
touching spheres in $\mathbb{R}^d$ associated with edge $e$ of the contact graph $G_{\mathcal{P}} = (V,E)$. First
define the separability measure for finite sphere packings $\mathcal{P}_{n}$ with $G_{\mathcal{P}_{n}} = (V_{n},E_{n})$ by
$$\sep(\mathcal{P}_{n}) = \sum_{e \in E_{n}} \frac{\left|\left\{H_{e} \; | \; H_{e} \cap \Int \left(x_{i} + \mathbb{S}^{d-1}\right) = \emptyset, 1 \leq i \leq n \right\}\right|}{|E_{n}|}.$$
If a sphere packing $\mathcal{P} \hookrightarrow \mathbb{R}^d$ can be constructed so that $\mathcal{P} = \lim_{n \rightarrow \infty} \mathcal{P}_{n}$
for some sequence of finite sphere packings $\mathcal{P}_{n}$, then
$$\sep(\mathcal{P}) = \lim_{n \rightarrow \infty} \sum_{e \in E_{n}} \frac{\left|\left\{H_{e} \; | \; H_{e} \cap \Int \left(x_{i} + \mathbb{S}^{d-1}\right) = \emptyset, 1 \leq i \leq n \right\}\right|}{|E_{n}|}.$$
Observe that if every tangent hyperplane $H_{e}$ at a contact point associated with the edge $e$ intersects the interior of
another sphere in the packing $\mathcal{P}$ then $\sep(\mathcal{P})=0$ and similarly if none intersect the interior of
a sphere in the packing then $\sep(\mathcal{P})=1$; in the former case $\mathcal{P}$ is called inseparable and in the latter
case $\mathcal{P}$ is called totally separable.

\section{Acknowledgements}
Many thanks are to my first supervisor K\'{a}roly Bezdek for introducing me to the topic of totally
separable sphere packings and having so many discussions with me regarding geometry research over the years.
Special thanks are also to Jonathan Bowers for helping to check through George Olshevsky's 143 honeycombs for 
realizable packings; three of which remained unnoticed to me in the compilation of Theorem \ref{R4}.


\begin{thebibliography}{00}
\bibitem{Hadwiger}
H.~Hadwiger.
\newblock {Nonseparable convex systems.}
\newblock Amer. Math. Monthly: 1947, vol: 54, pp: 583 - 585.

\bibitem{Toth}
G.F.~T\'{o}th.
\newblock {On totally separable domains.}
\newblock Acta mathematica Hungarica: 1973, vol: 24, pp: 229 - 232.

\bibitem{Kertesz}
G.~Kert\'{e}sz.
\newblock {On totally separable packings of equal balls.}
\newblock Acta mathematica Hungarica: 1988, vol: 51, pp: 363 - 364.

\bibitem{Pach}
J.~Pach and G.~Tardos.
\newblock {Separating Convex Sets By Straight Lines.}
\newblock Discrete Mathematics: 2001, vol: 24, pp: 427 - 433.

\bibitem{Harborth}
H.~Harborth.
\newblock {L\"{o}sung zu Problem 664A.}
\newblock Elem. Math. 29 (1974), 14-15.

\bibitem{Reid}
K.~Bezdek and S.~Reid.
\newblock {Contact graphs of unit sphere packings revisited}
\newblock Journal of Geometry: April 2013, vol: 104, pp: 57 - 83.

\bibitem{Holmes-Cerfon}
M.~Holmes-Cerfon.
\newblock {Enumerating nonlinearly rigid sphere packings.}
\newblock arXiv:1407.3285 (2014).

\bibitem{Bezdek}
K.~Bezdek.
\newblock {Lectures on Sphere Arrangements - the Discrete Geometric Side.}
\newblock Springer, 2013.

\bibitem{Alonso}
L.~Alonso and R.~Cerf.
\newblock {The Three Dimensional Polyominoes of Minimal Area.}
\newblock Electronic Journal of Combinatorics: 1996, vol: 3.

\bibitem{Olshevsky}
G.~Olshevsky.
\newblock {Uniform Panoploid Tetracombs.}
\newblock {Unpublished manuscript: 2006, $\mathsf{members.aol.com/Polycell/uniform.html}$}

\bibitem{BezdekSz}
K.~Bezdek, B.~Szalkai, and I.~Szalkai.
\newblock {On contact numbers of totally separable unit sphere packings.}
\newblock arXiv:1501.07907 (2015).

\bibitem{Johnson}
N.~Johnson.
\newblock {Uniform Polytopes.}
\newblock {Unpublished manuscript.}

\bibitem{Klitzing}
R.~Klitzing.
\newblock {$\mathsf{http://bendwavy.org/klitzing/home.htm}$}

\end{thebibliography}
\end{document}